\newcommand{\norm}[1]{\|#1\|}
\newcommand{\n}{\hspace*{-6pt}}
\renewcommand{\c}{\mathbf{c}}
\renewcommand{\r}{\mathbf{r}}
\newcommand{\E}{\mathbf{E}}
\DeclareMathOperator{\diam}{diam}
\DeclareMathOperator{\unif}{uniform}
\subjclass{05C40, 05C90, 37N99, 60G42, 91C20, 91D25, 91D30, 93D50, 94C15}
\keywords{Mixed Deffuant dynamics, asymptotic stability, consensus, social network}
\title{Mixed Deffuant Dynamics}
\author{Hsin-Lun Li}
\date{}
\email{hsinlunl@asu.edu}
\theoremstyle{definition}
\newtheorem{theorem}{Theorem}
\newtheorem{lemma}[theorem]{Lemma}
\newtheorem{corollary}[theorem]{Corollary}
\newtheorem{example}[theorem]{Example}
\begin{document}

\allowdisplaybreaks

\thispagestyle{firstpage}
\maketitle
\begin{center}
    Hsin-Lun Li
\end{center}

\begin{abstract}
The original Deffuant model consists of a finite number of agents whose opinion is a number in $[0,1]$. Two socially connected agents are uniformly randomly selected at each time step and approach each other at a rate $\mu\in [0,1/2]$ if and only if their opinions differ by at most some confidence threshold $\epsilon>0$. In this paper, we consider a variant of the Deffuant model, namely the mixed model, where the convergence parameter $\mu$ and the social relationship can vary over time. We investigate circumstances under which asymptotic stability holds or a consensus can be achieved. Also, we derive a nontrivial lower bound for the probability of consensus which is independent of the number of agents.
\end{abstract}

\section{Introduction}
The original Deffuant model comprises a finite number of agents whose opinion is a number in $[0,1]$.  Two socially connected agents are uniformly randomly selected at each time step and approach each other at a rate $\mu\in[0,1/2]$ if and only if their opinions differ by at most some confidence threshold $\epsilon>0$. Interpreting in graph, two types of graphs are involved--the social graph and the opinion graph. The social graph depicts a social relationship, such as a friendship, whereas the opinion graph indicates an opinion relationship. Both graphs have a common vertex set in which each vertex stands for an agent. The edge of the social graph exists if and only if the two vertices are socially connected, whereas the edge of the opinion graph exists if and only if the two vertices are at a distance of at most $\epsilon$ apart. Two vertices are neighbors if and only if the edge connecting the two exists. The social relationship is assumed constant. The model is as follows:
$$\begin{array}{rcl}
 \displaystyle  x_i(t+1)  &\n=\n & x_i(t) + \mu(x_j(t)-x_i(t))\mathbbm{1}\{\norm{x_i(t)-x_j(t)}\leq\epsilon \}\vspace{2pt} \\
\displaystyle   x_j(t+1)  &\n=\n & x_j(t) + \mu(x_i(t)-x_j(t))\mathbbm{1}\{\norm{x_i(t)-x_j(t)}\leq\epsilon \}  
\end{array}$$
where
$$\begin{array}{l}
  \displaystyle   x_i(t)=\hbox{opinion of agent $i$ at time $t$},\vspace{2pt}  \\
  \displaystyle   \hbox{Agents}\ i\ \hbox{and}\ j\ \hbox{are socially connected and selected at time}\ t, \vspace{2pt}\\
 \displaystyle    \mu=\hbox{convergence parameter}. 
\end{array}$$
The authors in~\cite{Deffuant} assume that the social graph is constant and connected and demonstrate that asymptotic stability holds for the Deffuant model in all finite dimensional real number systems. Now, we consider a variant of the Deffuant model, namely the mixed model, where the opinion space can be high dimensional, say $\mathbf{R^d}$, and the social relationship among all agents and the convergence parameter can vary over time. That is, all agents can decide their social relationship with each other and the two selected social neighbors can decide their approach rate at all times. The mixed model is as follows:
$$\begin{array}{rcl}
 \displaystyle  x_i(t+1)  &\n=\n & x_i(t) + \mu(t)(x_j(t)-x_i(t))\mathbbm{1}\{\norm{x_i(t)-x_j(t)}\leq\epsilon \}\vspace{2pt} \\
\displaystyle   x_j(t+1)  &\n=\n & x_j(t) + \mu(t)(x_i(t)-x_j(t))\mathbbm{1}\{\norm{x_i(t)-x_j(t)}\leq\epsilon \}  
\end{array}$$
where
$$\begin{array}{l}
 \displaystyle    x_i(t)=\hbox{opinion of agent $i$ at time $t$},\vspace{2pt}  \\
 \displaystyle    \hbox{Agents}\ i\ \hbox{and}\ j\ \hbox{are socially connected and selected at time}\ t, \vspace{2pt}\\
     \mu(t)=\hbox{convergence parameter at time}\ t. 
\end{array}$$ 
In particular, the mixed model reduces to the Deffuant model if the social graph and convergence parameter are constant, i.e., $G(t)=G$ and $\mu(t)=\mu$ at all times. Say vertices $i$ and $j$ are opinion-connected at time $t$, or edge $(i,j)$ is opinion-connected at time $t$ if $\norm{x_i(t)-x_j(t)}\leq\epsilon.$  Denote $G=(V,E)$ as a graph $G$ with vertex set $V$ and edge set $E$.  The social graph at time $t$ and the opinion graph at time $t$ are
$$G(t)=(V,E(t))\quad \hbox{and}\quad \mathscr{G}(t)=(V,\mathscr{E}(t))$$
where
$$\begin{array}{rcl}
 \displaystyle   E(t) &\n=\n& \{(i,j):\hbox{vertices}\ i\ \hbox{and}\ j\ \hbox{are socially connected}\} \vspace{2pt}\\
 \displaystyle   \mathscr{E}(t)&\n=\n& \{(i,j): \norm{x_i(t)-x_j(t)}\leq\epsilon\}.
\end{array}$$
The opinion update occurs if and only if the selected social edge is opinion-connected. Therefore, we define a \emph{profile} at time $t$ as the intersection of a social graph and an opinion graph at time $t$, namely $G(t)\cap \mathscr{G}(t)$. Note that the edges in~$G(t)\cap\mathscr{G}(t)$ are both opinion-connected and socially connected. Prior to showing asymptotic stability holds for the mixed model, we introduce some terms. A graph is $\delta$-\emph{trivial} if any two vertices are at a distance of at most $\delta$ apart.  The \emph{convex hull} generated by~$v_1, v_2, \ldots, v_n \in \mathbf{R^d}$ is the smallest convex set containing~$v_1, v_2, \ldots, v_n$, i.e.,
 $$ C (\{v_1, v_2 \ldots, v_n \}) = \{v : v = \sum_{i = 1}^n \lambda_i v_i \ \hbox{where} \ (\lambda_i)_{i = 1}^n \ \hbox{is stochastic} \}. $$
The author in~\cite{mhk} has indicated that an opinion graph is nonincreasing and preserves~$\delta$-triviality for all~$\delta>0$ over time.

\section{The model}
\begin{lemma}\label{inq}
For all $c\in \mathbf{R^d}$,
$$\begin{array}{rcl}
  \displaystyle  \norm{x_i(t+1)-c}+\norm{x_j(t+1)-c} &\n\leq\n& \norm{x_i(t)-c}+\norm{x_j(t)-c}, \vspace{2pt}\\
  \displaystyle \norm{x_i(t+1)-c}+\norm{x_j(t+1)-c} &\n\leq\n& \norm{x_i(t)-c}+\norm{x_j(t)-c} \vspace{2pt}\\
   &&-2\norm{x_i(t)-x_i(t+1)}+2\norm{c_{ij}(t)-c}
\end{array}$$
where $c_{ij}(t)=(x_i(t)+x_j(t))/2$.
\end{lemma}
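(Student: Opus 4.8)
The plan is to reduce both inequalities to the triangle inequality after introducing convenient notation. Write $a=x_i(t)$, $b=x_j(t)$, $m=c_{ij}(t)=(a+b)/2$, and set $\nu=\mu(t)\mathbbm{1}\{\norm{a-b}\leq\epsilon\}\in[0,1/2]$, so that the update rule reads $x_i(t+1)=(1-\nu)a+\nu b$ and $x_j(t+1)=(1-\nu)b+\nu a$ whether or not the selected edge is opinion-connected (the no-update case being simply $\nu=0$). First I would record three elementary consequences of this representation: the midpoint is preserved, $c_{ij}(t+1)=m$; the displacement satisfies $\norm{x_i(t)-x_i(t+1)}=\nu\norm{a-b}$; and, since $x_i(t+1)-m=(1/2-\nu)(a-b)$ and $x_j(t+1)-m=(1/2-\nu)(b-a)$ place the two updated opinions symmetrically about $m$, each at distance $(1/2-\nu)\norm{a-b}$, one has $\norm{x_i(t+1)-m}+\norm{x_j(t+1)-m}=(1-2\nu)\norm{a-b}$.

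For the first inequality I would invoke convexity of the norm. Since $x_i(t+1)-c=(1-\nu)(a-c)+\nu(b-c)$, the triangle inequality gives $\norm{x_i(t+1)-c}\leq(1-\nu)\norm{a-c}+\nu\norm{b-c}$, and symmetrically $\norm{x_j(t+1)-c}\leq(1-\nu)\norm{b-c}+\nu\norm{a-c}$. Adding the two, the coefficients of $\norm{a-c}$ and of $\norm{b-c}$ each sum to $1$, which yields the first bound at once.

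For the second inequality I would route through the midpoint $m$. Applying the triangle inequality to each term and then the collinear identity above,
$$\norm{x_i(t+1)-c}+\norm{x_j(t+1)-c}\leq\norm{x_i(t+1)-m}+\norm{x_j(t+1)-m}+2\norm{m-c}=(1-2\nu)\norm{a-b}+2\norm{m-c}.$$
It then remains to dominate $(1-2\nu)\norm{a-b}$. Writing $(1-2\nu)\norm{a-b}=\norm{a-b}-2\nu\norm{a-b}$, using the triangle inequality $\norm{a-b}\leq\norm{a-c}+\norm{b-c}$, and substituting the displacement identity $2\nu\norm{a-b}=2\norm{x_i(t)-x_i(t+1)}$ produces exactly the asserted right-hand side.

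The main obstacle is the second inequality, and specifically the observation that the symmetric placement of the updated opinions about the fixed midpoint collapses $\norm{x_i(t+1)-m}+\norm{x_j(t+1)-m}$ into the clean expression $(1-2\nu)\norm{a-b}$; once this identity and the displacement $\norm{x_i(t)-x_i(t+1)}=\nu\norm{a-b}$ are in hand, the remaining step is just the triangle inequality for $\norm{a-b}$. I would keep the indicator folded into $\nu$ throughout, so that the degenerate case $\nu=0$ (where the second bound holds merely because $2\norm{m-c}\geq 0$) requires no separate argument.
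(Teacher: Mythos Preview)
Your proof is correct and follows essentially the same route as the paper's: both arguments derive the first inequality from convexity of the norm and, for the second, route through the midpoint $c_{ij}(t)$ via the triangle inequality, use collinearity of $x_i(t)$, $x_i(t+1)$, $c_{ij}(t)$ to collapse $\norm{x_i(t+1)-c_{ij}(t)}+\norm{x_j(t+1)-c_{ij}(t)}$, and finish with $\norm{x_i(t)-x_j(t)}\leq\norm{x_i(t)-c}+\norm{x_j(t)-c}$. The only cosmetic difference is that you fold the indicator into $\nu$ to treat all cases at once, whereas the paper separates the ``not selected'' case before writing $x_i(t+1)=(1-\mu(t))x_i(t)+\mu(t)x_j(t)$.
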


\begin{proof}
It is clear that the two inequalities hold if $(i,j)\in E(t)$ is not selected at time $t$. Assume that $(i,j)\in E(t)$ is selected at time $t$, then,
\begin{align*}
   \norm{x_i(t+1)-c}=\norm{(1-\mu(t))(x_i(t)-c)+\mu(t)(x_j(t)-c)}
\end{align*}
therefore by the triangle inequality, we derive the first inequality. To show the second inequality, applying the triangle inequality,
\begin{align*}
   \norm{x_i(t+1)-c} &\leq \norm{x_i(t+1)-c_{ij}(t)}+\norm{c_{ij}(t)-c}\\
   & = \norm{x_i(t)-c_{ij}(t)}-\norm{x_i(t)-x_i(t+1)}+\norm{c_{ij}(t)-c}.
\end{align*}
Also,
$$\begin{array}{rcl}
  \displaystyle  \norm{x_i(t)-c_{ij}(t)}+\norm{x_j(t)-c_{ij}(t)}&\n=\n&\norm{x_i(t)-x_j(t)} \leq \norm{x_i(t)-c}+\norm{x_j(t)-c},\vspace{2pt}   \\
\displaystyle  \norm{x_i(t)-x_i(t+1)}&\n=\n& \norm{x_j(t)-x_j(t+1)}
\end{array}$$
therefore we have the second inequality.
\end{proof}

For all $c\in\mathbf{R^d}$, let $$Z_c(t)=\sum_{i\in V}\norm{x_i(t)-c}.$$ From Lemma \ref{inq}, $Z_c(t)$ is nonincreasing with respect to $t$ and
\begin{equation}\label{sup}
  Z_c(t)-Z_c(t+1)\geq 2(\norm{x_i(t)-x_i(t+1)}-\norm{c_{ij}(t)-c})  
\end{equation}
for $(i,j)\in E(t)$ selected at time $t$. Let $\bar{B}(x,r)=\{v:\norm{x-v}\leq r\}$, a closed ball centered at $x$ of radius $r$. Next, we study circumstances under which The distance between any two social and opinion neighbors is at most $\delta$ in finite time for all $\delta>0$. 

\begin{theorem}\label{fin}
Assume that $\liminf_{t\to\infty}\mu(t)>0$. Then for all $\delta>0$, all opinion edges of a profile are of length at most $\delta$ in finite time almost surely, i.e.,
$$\tau_{\delta}:=\inf\{t\geq 0:\norm{x_i(t)-x_j(t)}\leq\delta\ \hbox{for all}\ (i,j)\in E(t)\cap\mathscr{E}(t)\}<\infty\ \hbox{almost surely}.$$
\end{theorem}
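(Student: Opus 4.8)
The plan is to argue by contradiction: I would exhibit a nonnegative potential that drops by a fixed positive amount every time a \emph{bad} edge (an edge of $E(t)\cap\mathscr{E}(t)$ of length greater than $\delta$) is selected, and then show that on the event $\{\tau_\delta=\infty\}$ such a selection must occur infinitely often, driving the potential below $0$. (If $\delta\geq\epsilon$ there are no bad edges and $\tau_\delta=0$ trivially, so assume $\delta<\epsilon$.) Throughout I would use that every opinion update is a convex combination, so all opinions---and all midpoints $c_{ij}(t)$---remain in the compact set $K=C(\{x_1(0),\ldots,x_n(0)\})$, and that $\sum_{k\in V}x_k(t)$ is conserved; write $\bar{x}$ for the constant mean.

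For the potential I would use the variance about the mean, $W(t)=\sum_{k\in V}\norm{x_k(t)-\bar{x}}^2$. A direct computation for a selected, opinion-connected edge $(i,j)$ yields the exact identity
\begin{align*}
W(t)-W(t+1)=2\mu(t)\bigl(1-\mu(t)\bigr)\,\norm{x_i(t)-x_j(t)}^2 ,
\end{align*}
so $W$ is nonincreasing and bounded below by $0$. Fix $\mu_0>0$ and $T$ with $\mu(t)\geq\mu_0$ for all $t\geq T$, as guaranteed by $\liminf_t\mu(t)>0$; since $\mu(t)\leq 1/2$ we have $1-\mu(t)\geq 1/2$, so whenever a bad edge is selected at a time $t\geq T$ the drop is at least $\mu_0\delta^2$. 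One can instead stay with the paper's estimate~\eqref{sup}: cover $K$ by a finite $\tfrac12\mu_0\delta$-net $\{c_1,\ldots,c_m\}$, apply~\eqref{sup} with the net point $c_k$ nearest to $c_{ij}(t)$, and bound the decrease of $\sum_k Z_{c_k}$ below by a positive constant in exactly the same way, using that $\norm{x_i(t)-x_i(t+1)}=\mu(t)\norm{x_i(t)-x_j(t)}$.

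The probabilistic core is the main obstacle, because the social graph $E(t)$ and the rate $\mu(t)$ are allowed to vary in time and may even be chosen adaptively. I would model $E(t)$ and $\mu(t)$ as $\mathcal{F}_t$-measurable (decided before the selection at time $t$), with the selected edge drawn uniformly from $E(t)$ given $\mathcal{F}_t$. Let $B_t=\{\exists\,(i,j)\in E(t)\cap\mathscr{E}(t):\norm{x_i(t)-x_j(t)}>\delta\}\in\mathcal{F}_t$, so that $\{\tau_\delta=\infty\}=\bigcap_t B_t$, and let $D_t$ be the event that the edge selected at time $t$ is bad, noting $D_t\in\mathcal{F}_{t+1}$. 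On $B_t$ there is at least one bad edge among the at most $\binom{n}{2}$ social edges, so $\mathbf{P}(D_t\mid\mathcal{F}_t)\geq 1/\binom{n}{2}$. Hence on $\{\tau_\delta=\infty\}$ one has $\sum_{t\geq T}\mathbf{P}(D_t\mid\mathcal{F}_t)=\infty$, and L\'evy's conditional Borel--Cantelli lemma forces $D_t$ to occur for infinitely many $t\geq T$ almost surely on this event. Each such occurrence drops $W$ by at least $\mu_0\delta^2$, so $W(t)\to-\infty$, contradicting $W\geq 0$; therefore $\mathbf{P}(\tau_\delta=\infty)=0$.

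The delicate points are thus the adaptivity of $E(t)$ and $\mu(t)$---which rules out an independence-based Borel--Cantelli argument and requires the conditional (L\'evy) version---and securing uniform positive lower bounds both for the per-step selection probability (finiteness of $n$) and for the per-selection decrease of the potential (the hypothesis $\liminf_t\mu(t)>0$ together with $\mu\leq 1/2$, and, in the \eqref{sup}-variant, compactness of $K$). Once these are in place the contradiction is immediate.
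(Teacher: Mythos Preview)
Your argument is correct and follows the same contradiction-via-potential skeleton as the paper, but it differs in two substantive ways worth highlighting.

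First, the potential. The paper works with $Z_c(t)=\sum_i\norm{x_i(t)-c}$ for a single reference point $c$, and only \emph{after} extracting a bad subsequence uses a cube-covering compactness argument to locate a $c_0$ lying in infinitely many balls $\bar B(c_{ij}(t),\tilde\mu\delta/2)$, so that~\eqref{sup} yields a uniform drop along that sub-subsequence. Your primary choice, the second moment $W(t)=\sum_k\norm{x_k(t)-\bar x}^2$ with the exact identity $W(t)-W(t+1)=2\mu(t)(1-\mu(t))\norm{x_i-x_j}^2$, avoids the covering step entirely and gives the uniform drop immediately; note, however, that this identity uses the parallelogram law and therefore requires the norm to be Euclidean, whereas the paper's $Z_c$ argument relies only on the triangle inequality and works for an arbitrary norm. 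Your alternative---summing $Z_{c_k}$ over a fixed $\tfrac12\mu_0\delta$-net chosen in advance---neatly recovers norm-generality while replacing the paper's \emph{a posteriori} pigeonhole on cubes by a single nonincreasing scalar potential; this is a genuine simplification.

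Second, the probabilistic ingredient. The paper invokes the classical second Borel--Cantelli lemma after extracting, by pigeonhole, a fixed pair $(i,j)$ and a (random) subsequence of times at which it is bad, and then treats the selection events along this subsequence as independent. You instead work with the conditional (L\'evy) form: at every time on $\{\tau_\delta=\infty\}$ the conditional probability of selecting some bad edge is at least $1/\binom{n}{2}$, so $\sum_t\mathbf P(D_t\mid\mathcal F_t)=\infty$ and infinitely many bad selections occur. This is the right tool when $E(t)$ and $\mu(t)$ are allowed to be adaptive, and it also spares you the pigeonhole step of fixing a particular pair; the paper's independence-based version is adequate only under stronger (deterministic social graph, i.i.d.\ selections) assumptions that you correctly flag. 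In short: same architecture, cleaner potential, and a more robust Borel--Cantelli argument.
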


\begin{proof}
If $E(t)$ is empty for some $t\geq0$, then we are done. Assume that $E(t)$ is nonempty for all $t\geq 0$. Since $\liminf_{t\to\infty}\mu(t)>0$, there is $(u_k)_{k\geq 0}\subset\mathbf{N}$ increasing such that $\mu(u_k)\geq \tilde{\mu}>0$ for some $\tilde{\mu}$ and for all $k\geq 0$. Assume that this is not the case. Then, there are $$(t_k)_{k\geq 0}\subset (u_k)_{k\geq 0}\quad \hbox{and}\quad (i,j)\in E(t_k)\cap \mathscr{E}(t_k)$$ such that $$\norm{x_i(t_k)-x_j(t_k)}>\delta\ \hbox{for all}\ k\geq0.$$ 
Let $E_k$ be the event that the edge $(i,j)$ is selected at time $k$. Then, all $E_{t_k}$ are independent and
$$\sum_{k\geq0}P(E_{t_k})\geq \sum_{k\geq 0} 1/\binom{|V|}{2}=\infty $$ therefore by the second Borel-Cantelli Lemma, $P(\limsup_{k\to \infty}E_{t_k})=1.$ Hence, there is $(s_{\ell})_{\ell\geq 0}\subset (t_k)_{k\geq 0}$ such that $E_{s_{\ell}}$ holds for all $\ell\geq 0$. From \eqref{sup},
$$Z_c(s_{\ell})-Z_c(s_{\ell}+1)> 2(\mu(s_{\ell}) \delta-\norm{c_{ij}(s_{\ell})-c})\geq 2(\Tilde{\mu} \delta-\norm{c_{ij}(s_{\ell})-c})\geq \Tilde{\mu} \delta$$
for all $c\in \bar{B}(c_{ij}(s_{\ell}),\Tilde{\mu}\delta/2)$. Since $C\big((x_i(t))_{i\in V}\big)$ is bounded by $C\big((x_i(0))_{i\in V}\big)$, it can be covered by finitely many cubes of length $\Tilde{\mu}\delta/(2\sqrt{2})$, therefore existing a cube containing infinitely many $c_{ij}(s_{\ell})$. Hence, there is $I\subset (s_{\ell})_{\ell\geq 0}$ infinite with $\bigcap_{t\in I}\bar{B}(c_{ij}(t),\Tilde{\mu}\delta/2)$ containing some cube of length $\Tilde{\mu}\delta/(2\sqrt{2})$. Pick $c_0\in \bigcap_{t\in I}\bar{B}(c_{ij}(t),\Tilde{\mu}\delta/2)$,
$$\infty> \sum_{t\in I}(Z_{c_0}(t)-Z_{c_0}(t+1))\geq \sum_{t\in I}\Tilde{\mu}\delta=\infty,\ \hbox{a contradiction}.$$
\end{proof}

It turns out that any two social and opinion neighbors are at a distance of at most $\delta$ apart in finite time as long as there are infinitely many times that the two selected social neighbors approach each other at a rate no less than some positive number. It follows that asymptotic stability holds for the mixed model if the two selected social neighbors at all times approach each other at a rate no less than some positive number.

\begin{corollary}\label{asy}
Assume that $\inf_{t\geq 0}\mu(t)>0$. Then, all edges of a profile are of length at most $\delta$ after some almost surely finite time for all $\delta>0$, i.e.,
$$\norm{x_i(t)-x_j(t)}\leq \delta\ \hbox{for all}\ (i,j)\in E(s)\cap\mathscr{E}(s)\ \hbox{for some}\ t\ \hbox{and for all}\ s\geq t.$$
Therefore, asymptotic stability holds for the mixed model.
\end{corollary}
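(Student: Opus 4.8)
The plan is to recast the corollary as a statement about ``bad'' times and to prove it by rerunning the mechanism of Theorem~\ref{fin} under the stronger hypothesis. Call a time $s$ \emph{bad} if some $(i,j)\in E(s)\cap\mathscr E(s)$ has $\norm{x_i(s)-x_j(s)}>\delta$; the asserted persistence is exactly the statement that, almost surely, only finitely many times are bad (its negation furnishes bad times at infinitely many instants). I would prove this by contradiction, supposing that the set of bad times is infinite with positive probability, and deriving an impossible infinite decrease of one of the potentials $Z_c$.

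On the event that infinitely many times are bad, finiteness of $V$ lets me apply the pigeonhole principle to the $\binom{|V|}{2}$ candidate pairs: some fixed unordered pair $(i,j)$ is a bad edge at infinitely many random times $t_1<t_2<\cdots$. The hypothesis $\inf_{t\ge 0}\mu(t)>0$ enters here. Writing $\tilde\mu=\inf_t\mu(t)>0$, every time, and in particular every $t_k$, satisfies $\mu(t_k)\ge\tilde\mu$, so the contraction estimate is available at all of the bad times with no need to thin $(t_k)$ to a subsequence on which $\mu$ is controlled. This is exactly what upgrades the conclusion of Theorem~\ref{fin} to persistence: there the weaker hypothesis $\liminf_t\mu(t)>0$ sufficed only because its negation $\tau_\delta=\infty$ forced \emph{every} time to be bad, hence to overlap the set on which $\mu$ is bounded below, whereas the negation of persistence yields no such overlap unless $\mu$ is bounded below at all bad times.

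Next I would run the Borel--Cantelli step on $(i,j)$. At each $t_k$ the edge $(i,j)$ belongs to $E(t_k)$, so conditionally on the past it is the selected edge with probability at least $1/\binom{|V|}{2}$; the conditional form of the second Borel--Cantelli lemma then gives, almost surely on the bad event, a further subsequence $(s_\ell)$ along which $(i,j)$ is actually selected. For each $s_\ell$ one has $\norm{x_i(s_\ell)-x_i(s_\ell+1)}=\mu(s_\ell)\norm{x_i(s_\ell)-x_j(s_\ell)}>\tilde\mu\delta$, so by \eqref{sup}, exactly as in Theorem~\ref{fin}, $Z_c(s_\ell)-Z_c(s_\ell+1)>\tilde\mu\delta$ for all $c\in\bar B(c_{ij}(s_\ell),\tilde\mu\delta/2)$. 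Since the midpoints $c_{ij}(s_\ell)$ remain in the bounded set $C((x_i(0))_{i\in V})$, I cover that set by finitely many cubes of diameter at most $\tilde\mu\delta/2$, select a cube containing infinitely many midpoints, pick $c_0$ in the nonempty intersection of the corresponding balls, and obtain $\sum_\ell(Z_{c_0}(s_\ell)-Z_{c_0}(s_\ell+1))=\infty$, contradicting $Z_{c_0}\ge 0$. Hence the bad event is null and persistence holds for each fixed $\delta>0$.

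Finally, intersecting the almost-sure persistence events over $\delta=1/n$, $n\ge 1$, shows that the active interactions eventually become arbitrarily small, from which asymptotic stability---the almost sure convergence of the configuration $(x_i(t))_{i\in V}$---follows as in~\cite{Deffuant}, the monotone convergent potentials $Z_c$ controlling the displacements. I expect the main obstacle to lie in the Borel--Cantelli step, because the bad times $(t_k)$ are random rather than deterministic: one must argue conditionally on the filtration, checking that the selection at each bad time has conditional probability at least $1/\binom{|V|}{2}$ and that the bad event and the ensuing covering/extraction are carried out on one and the same probability-one subevent.
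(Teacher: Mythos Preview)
Your proposal is correct and mirrors the paper's proof almost exactly: both argue by contradiction, pigeonhole to a fixed pair $(i,j)$ that is a long profile edge at infinitely many times, invoke the second Borel--Cantelli lemma so that $(i,j)$ is actually selected along a subsequence, and then rerun the covering argument from Theorem~\ref{fin} (with $\tilde\mu=\inf_t\mu(t)$ in place of the $\liminf$) to contradict the boundedness of $Z_{c_0}$. The only difference is in the last step: you defer asymptotic stability to~\cite{Deffuant} via the convergent potentials $Z_c$, whereas the paper argues directly that once all profile edges have length at most $\delta/|V|$, each connected component of the profile has diameter at most $(|V|-1)\delta/|V|<\delta$, so every opinion is confined to an arbitrarily small convex hull.
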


\begin{proof}
Let $\mathscr{A}=\{t\geq 0:E(t)\neq \varnothing\}$. If $\mathscr{A}$ is finite, then we are done. Assume that $\mathscr{A}$ is infinite and that there is an edge $(i,j)$ of a profile of length larger than $\delta$ for infinitely many times for some $\delta>0$. Let $\mu=\inf_{t\geq 0}\mu(t)$, $$\mathscr{B}=\{t\in\mathscr{A}: \norm{x_i(t)-x_j(t)}>\delta\}\ \hbox{and}\ E_k=\{(i,j)\ \hbox{is selected at time}\ k\}.$$ By the second Borel-Cantelli Lemma, $$\sum_{t\in\mathscr{B}}P(E_t)\geq \sum_{t\in\mathscr{B}}1/\binom{|V|}{2}=\infty\quad \hbox{implies}\quad P(\limsup_{t\in\mathscr{B},t\to\infty}E_t)=1.$$ Therefore, almost surely infinitely many $E_t$ hold for $t\in \mathscr{B}$, say $(t_k)_{k\geq 0}$. Similarly as the proof in Theorem~\ref{fin}, there is $I\subset (t_k)$ infinite with $$\bigcap_{t\in I}\bar{B}(c_{ij}(t),\mu\delta/2)\ \hbox{containing some cube of length}\ \mu\delta/(2\sqrt{2}).$$ Pick $c_0\in \bigcap_{t\in I}\bar{B}(c_{ij}(t),\mu\delta/2)$,
$$\infty> \sum_{t\in I}(Z_{c_0}(t)-Z_{c_0}(t+1))\geq \sum_{t\in I}\mu\delta=\infty,\ \hbox{a contradiction}.$$
To show the asymptotic stability of $(x_i)_{i\in V}$, let the convex hull of a component~$G$ be
 $$ Cv(G) = C(\{x_j : j \in V(G) \}). $$ All edges of a profile are of length at most $\delta/|V|$ after some time $t$ for all $\delta>0$.  For all $s\geq t$ and $i\in V$, $x_i(s)\in Cv(G)$ for some component $G$ of $G(s)\cap \mathscr{G}(s)$ and $$\diam(Cv(G))\leq(|V|-1)\delta/|V|<\delta.$$ Therefore, the asymptotic stability holds for the mixed model.
\end{proof}

\section{Probability of consensus}
Assume that all initial opinions $x_i(0)$, $i\in V$, are independent and identically distributed random variables on the bounded convex opinion space $\Delta\subset\mathbf{R^d}$, and let $X$ be the random variable with distribution
$$P(X\in B)=P(x_i(0)\in B)\ \hbox{for all}\ i\in V\ \hbox{and all Borel sets}\ B\subset \Delta.$$ Let $\c=\arg\inf_{c\in \mathbf{R^d}}\sup_{z\in\Delta}\norm{z-c}$ be the center of $\Delta$ and $\r=\sup_{z\in \Delta}\norm{z-\c}$. Then, $\c$ uniquely exists and $d/2\leq \r\leq \sqrt{3}d/2$ for $d=\diam(\Delta)$. Let $\mathscr{C}$ be the collection of all sample points that lead to a consensus, i.e., $$\mathscr{C}=\{\limsup_{t\to\infty}\max_{i,j\in V}\norm{x_i(t)-x_j(t)}=0\}.$$ Assume that the social graph is connected for infinitely many times. Define 
\begin{align*}
   T_{\delta} =&\inf\{t\geq 0: G(t)\cap\mathscr{G}(t)\ \hbox{is connected and all edges of}\ G(s)\cap\mathscr{G}(s)\ \\
   &\hspace{20pt}\hbox{are of length at most $\delta$ for all}\ s\geq t\}, 
\end{align*}
 the earliest time that a profile is connected, all edges of which are of length at most $\delta$ afterward. Thus, $T_{\delta}$ is almost surely finite if $\inf_{t\geq 0}\mu(t)>0$. The following lemma indicates circumstances under which a consensus can be achieved.

\begin{lemma}\label{consensus}
Assume that $\inf_{t\geq 0}\mu(t)>0$ and $\epsilon>\r$. Then, there is $\delta$ such that $$A=\{\norm{x_i(T_{\delta})-\c}\leq\epsilon-\r-\eta\ \hbox{for some}\ i\in V\}\subset\mathscr{C}\ \hbox{for all}\ 0<\eta<\epsilon-\r.$$
\end{lemma}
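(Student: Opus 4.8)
The plan is to show that on the event $A$ the diameter $\diam(\{x_i(t):i\in V\})$ tends to $0$, which is exactly consensus. Three monotonicity facts will drive everything: the configuration stays in its initial convex hull, so the diameter is non-increasing in $t$; $Z_c(t)$ is non-increasing for every $c$ by Lemma~\ref{inq}; and, since a selected opinion-connected pair exchanges opinion mass symmetrically, the mean $\bar x=\frac1{|V|}\sum_i x_i(t)$ is conserved while $\sum_i\norm{x_i(t)-\bar x}^2$ is non-increasing. First I would unpack $A$: on $A$ we have $T_\delta<\infty$ together with an agent $i_0$ satisfying $\norm{x_{i_0}(T_\delta)-\c}\le\epsilon-\r-\eta$. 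Since every opinion lies in $\Delta\subseteq\bar B(\c,\r)$, the triangle inequality gives $\norm{x_{i_0}(T_\delta)-x_j(T_\delta)}\le\epsilon-\eta$ for all $j$, so at time $T_\delta$ the hub $i_0$ is opinion-connected to every agent with a uniform slack $\eta$; this is where $\epsilon>\r$ enters, making the slack positive.

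The linchpin is to promote this connectivity to opinion-\emph{completeness}. At $T_\delta$ the profile $G(T_\delta)\cap\mathscr{G}(T_\delta)$ is connected and all its edges have length at most $\delta$, so a spanning path of at most $|V|-1$ such edges yields $\diam(\{x_i(T_\delta)\})\le(|V|-1)\delta$. Choosing $\delta$ with $(|V|-1)\delta<\epsilon$ makes the diameter strictly below $\epsilon$ at $T_\delta$, hence below $\epsilon$ for all $s\ge T_\delta$ by monotonicity of the diameter. Thus every pair is opinion-connected, $\mathscr{G}(s)$ is complete, and the profile coincides with the social graph $G(s)$ for all $s\ge T_\delta$.

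This is what lets social connectivity transfer to the profile: as $G(s)$ is connected infinitely often, so is the profile. For an arbitrary $\delta'>0$, Corollary~\ref{asy} (valid since $\inf_t\mu(t)>0$) forces all profile edges to have length at most $\delta'$ after an almost surely finite time; at a profile-connected instant $s$ beyond that time and beyond $T_\delta$, the same spanning-path estimate gives $\diam(\{x_i(s)\})\le(|V|-1)\delta'$, and monotonicity freezes this bound for all later times. Letting $\delta'\downarrow0$ yields $\diam\to0$, so $A\subseteq\mathscr{C}$.

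The step I expect to be the main obstacle is the passage to completeness with a threshold $\delta$ that does not shrink with $|V|$: the bound above forces $\delta<\epsilon/(|V|-1)$, whereas the subsequent lower bound on $P(\mathscr{C})$ wants a $|V|$-free $\delta$. The more faithful route keeps $\delta$ and $\eta$ fixed and shows directly that $x_{i_0}$ never drifts far from $\c$, bounding the cumulative hub displacement (each update moves it by at most $\mu(t)\delta\le\delta/2$) by $\eta$ through the monotone potentials $Z_{\c}$ and $\sum_i\norm{x_i-\bar x}^2$, so that $\norm{x_{i_0}(s)-\c}<\epsilon-\r$ for every $s$. One must be careful that mere hub-connectivity (a star at $i_0$) does not suffice, since the profile is the intersection with the social graph; one genuinely needs the diameter below $\epsilon$. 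Controlling the infinite cumulative drift with the finite budget $\eta$ is the crux, and it is precisely what would decouple $\delta$ from $|V|$ and feed the consensus-probability estimate.
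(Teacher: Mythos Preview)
Your core argument (paragraphs two and three) is correct and in fact slightly more direct than the paper's. The paper also takes $\delta$ of order $1/|V|$ (specifically $\delta=\eta/|V|$), but rather than reading the diameter bound straight off the definition of $T_\delta$, it uses the hub $i_0$ from your first paragraph and runs an induction along paths in the \emph{social} graph $G(T_\delta)$: from $\norm{x_{i_0}(T_\delta)-x_j(T_\delta)}\le\epsilon-\eta$ for all $j$ it argues edge by edge that every social edge on a path issuing from $i_0$ is opinion-connected, hence of length at most $\delta$, and only then concludes $\diam\le(|V|-1)\delta<\eta<\epsilon$ via connectedness of $G(T_\delta)$. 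You bypass this induction by noting that profile-connectedness at $T_\delta$ is already built into the stopping time, so the spanning-path estimate in the profile gives the diameter bound immediately; your route never actually needs the hub, and it even proves the stronger inclusion $\{T_\delta<\infty\}\subset\mathscr{C}$, of which $A\subset\mathscr{C}$ is a special case.

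Your final paragraph rests on a misconception. Nothing downstream requires a $|V|$-free $\delta$: in the subsequent theorem $\delta$ enters only through the stopping time $T_\delta$, and the estimate comes from optional stopping, $\E Z_\c(T_\delta)\le\E Z_\c(0)=|V|\,\E\norm{X-\c}$, combined with the pointwise bound $Z_\c(T_\delta)\ge|V|(\epsilon-\r-\eta)$ on $A^c$, so the factors of $|V|$ cancel. Both the paper's $\delta=\eta/|V|$ and your $\delta<\epsilon/(|V|-1)$ are acceptable, and the ``more faithful route'' you sketch---controlling the hub's cumulative drift by $\eta$---is unnecessary; as you yourself anticipate, bounding an infinite sum of per-step displacements by a fixed $\eta$ would need a separate argument anyway.
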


\begin{proof}
Take $\delta=\eta/|V|$. For all $j\in V$, $\norm{x_j(T_{\delta})-c}\leq r$ therefore by the triangle inequality, $\norm{x_i(T_{\delta})-x_j(T_{\delta})}\leq \epsilon-\eta$. Thus, $(i,j)\in \mathscr{E}(T_{\delta})$. In particular, $$\norm{x_i(T_{\delta})-x_j(T_{\delta})}\leq \delta\quad \hbox{if}\quad (i,j)\in E(T_{\delta})\cap\mathscr{E}(T_{\delta}).$$ We claim that $\norm{x_j(T_{\delta})-x_k(T_{\delta})}\leq \delta$ for all $(j,k)\in E(T_{\delta})$. For all $k\in V$, there is an $i,k$-path in $G(T_{\delta})$, say $i=i_0 i_1\ldots i_{n-1}i_n=k$, of length at most $|V|-1$. Argue by induction that 
$$\norm{x_{i_\ell}(T_{\delta})-x_{i_{\ell-1}}(T_{\delta})}\leq\delta\ \hbox{for all}\ \ell=1,2,\ldots,n.$$
Clearly, $\norm{x_{i_1}(T_{\delta})-x_{i_0}(T_{\delta})}\leq\delta$. Suppose that it is true for $\ell=1,2,\ldots,m-1$. Assume by contradiction that $(i_m,i_{m-1})\notin \mathscr{E}(T_{\delta})$. Then, $\norm{x_{i_m}(T_{\delta})-x_{i_{m-1}}(T_{\delta})}>\epsilon$. Thus, by the triangle inequality,
\begin{align*}
    &\norm{x_{i_m}(T_{\delta})-x_i(T_{\delta})}\\
    &\geq\norm{x_{i_m}(T_{\delta})-x_{i_{m-1}}(T_{\delta})}-\norm{x_{i_{m-1}}(T_{\delta})-x_{i_{m-2}}(T_{\delta})+\ldots+x_{i_1}(T_{\delta})-x_i(T_{\delta})}\\
    &>\epsilon-(m-1)\delta>\epsilon-\eta,\ \hbox{a contradiction.}
\end{align*}
For all edges $(j,k)\in E(T_{\delta})$, we can always construct a path starting from $i$ that contains $(j,k)$, therefore verifying the claim. Also, via the triangle inequality and connectedness of $G(T_{\delta})$,
$$\norm{x_j(T_{\delta})-x_k(T_{\delta})}\leq (|V|-1)\delta<\eta<\epsilon\quad \hbox{for all}\quad j,k\in V.$$
An opinion graph preserves $\epsilon$-triviality; therefore $G(s)\cap\mathscr{G}(s)=G(s)$ for all $s\geq T_{\delta}$. Also, a social graph is connected for infinitely many times; hence $G(s)$ is connected for infinite many $s\geq T_{\delta}$. From Corollary~\ref{asy}, a consensus is achieved eventually.
\end{proof}

It follows that the probability of consensus has a nontrivial lower bound if the two selected social neighbors approach each other at a rate no less than some positive number at all times.

\begin{theorem}
Assume that $\inf_{t\geq 0}\mu(t)>0$ and $\epsilon>\r$. Then, 
$$P(\mathscr{C})\geq  1-\frac{\E(\norm{X-\c})}{\epsilon-\r}.$$
In particular, $P(\mathscr{C})=1$ if~$\epsilon\geq\diam(\Delta)$.
\end{theorem}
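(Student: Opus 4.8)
The plan is to combine the containment $A\subset\mathscr{C}$ furnished by Lemma~\ref{consensus} with the monotonicity of $Z_{\c}$ and one application of Markov's inequality. Fix $0<\eta<\epsilon-\r$, so that $\epsilon-\r-\eta>0$, and let $\delta=\eta/|V|$ be the associated value from the lemma; recall that $T_{\delta}$ is almost surely finite since $\inf_{t\geq0}\mu(t)>0$. Because $A\subset\mathscr{C}$, it suffices to lower bound $P(A)$, i.e.\ to upper bound
$$P(A^c)=P\big(\norm{x_i(T_{\delta})-\c}>\epsilon-\r-\eta\ \hbox{for all}\ i\in V\big).$$
On $A^c$ every summand of $Z_{\c}(T_{\delta})=\sum_{i\in V}\norm{x_i(T_{\delta})-\c}$ exceeds $\epsilon-\r-\eta$, so the average exceeds it as well.

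Next I would transfer this estimate back to time $0$. By Lemma~\ref{inq} the function $Z_{\c}(t)$ is nonincreasing in $t$, hence $Z_{\c}(T_{\delta})\leq Z_{\c}(0)$ almost surely, which gives
$$A^c\subset\Big\{\tfrac{1}{|V|}\sum_{i\in V}\norm{x_i(0)-\c}>\epsilon-\r-\eta\Big\}.$$
Applying Markov's inequality to the nonnegative variable $\frac{1}{|V|}\sum_{i\in V}\norm{x_i(0)-\c}$, and using that the $x_i(0)$ are i.i.d.\ copies of $X$ so that its expectation equals $\E(\norm{X-\c})$, yields $P(A^c)\leq \E(\norm{X-\c})/(\epsilon-\r-\eta)$. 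Consequently $P(\mathscr{C})\geq P(A)\geq 1-\E(\norm{X-\c})/(\epsilon-\r-\eta)$ for every admissible $\eta$, and letting $\eta\downarrow0$ produces the stated bound. The point that needs care is that $\delta$, the stopping time $T_{\delta}$, and the event $A$ all depend on $\eta$; I would therefore hold $\eta$ fixed throughout the estimate and pass to the limit only at the end, which is legitimate since the right-hand side is monotone in $\eta$. This transfer from the random time $T_{\delta}$ to the deterministic initial configuration, powered entirely by the monotonicity of $Z_{\c}$, is the crux of the argument.

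For the final assertion I would argue directly rather than through the displayed bound, since the latter is vacuous when $\epsilon\geq\diam(\Delta)$. If $\epsilon\geq\diam(\Delta)$, then every pair satisfies $\norm{x_i(0)-x_j(0)}\leq\diam(\Delta)\leq\epsilon$, so $\mathscr{G}(0)$ is complete; because an opinion graph preserves $\epsilon$-triviality, $\mathscr{G}(t)$ is complete for all $t$, whence the profile satisfies $G(t)\cap\mathscr{G}(t)=G(t)$ at every time. Since the social graph is connected infinitely often, the profile is connected infinitely often, and exactly as in the last step of the proof of Lemma~\ref{consensus}, Corollary~\ref{asy} then forces a consensus almost surely, giving $P(\mathscr{C})=1$.
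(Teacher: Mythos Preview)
Your proof is correct and follows the same overall strategy as the paper's: invoke Lemma~\ref{consensus} to obtain $A\subset\mathscr{C}$, bound $P(A^c)$ via the fact that $Z_\c(T_\delta)>|V|(\epsilon-\r-\eta)$ on $A^c$, and let $\eta\to0$. The implementation differs in a small but pleasant way. The paper treats $Z_\c$ as a bounded supermartingale, applies the optional stopping theorem to get $\E(Z_\c(0))\geq\E(Z_\c(T_\delta))$, and then conditions on $A^c$. You instead use the stronger fact, already contained in Lemma~\ref{inq}, that $Z_\c$ is \emph{pathwise} nonincreasing: this lets you pull the inequality on $A^c$ back to the deterministic time $0$ and finish with Markov's inequality applied to $Z_\c(0)$, bypassing optional stopping entirely. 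Your route is a little more elementary and makes explicit that no genuine martingale machinery is required here. You also supply a direct argument for the ``in particular'' assertion $P(\mathscr{C})=1$ when $\epsilon\geq\diam(\Delta)$; the paper's proof does not address this separately, and you are right that the displayed bound alone does not force probability one (for instance, when $\Delta=B(\vec 0,r)$ and $\epsilon=2r=\diam(\Delta)$ the bound gives only $1/2$), so your appeal to $\epsilon$-triviality and Corollary~\ref{asy} genuinely fills a gap.
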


\begin{proof}
Via Lemma~\ref{consensus}, there is $\delta$ such that $$A=\{\norm{x_i(T_{\delta})-\c}\leq\epsilon-\r-\eta\ \hbox{for some}\ i\in V\}\subset\mathscr{C}\ \hbox{for all}\ 0<\eta<\epsilon-\r.$$ Since $Z_\c(t)$, $t\geq 0$, is a bounded supermartingale and $T_{\delta}$ is almost surely finite, by the optional stopping theorem and conditional expectation restricting to~$A^c$, 
 \begin{align*}
     |V|\ \E(\norm{X-\c})= \E(Z_\c(0))&\geq \E(Z_\c(T_{\delta}))\\
     &\geq \E(Z_c(T_{\delta})|A^c)P(A^c)\geq |V| (\epsilon-\r-\eta)P(A^c)
 \end{align*}
 Therefore, $$P(\mathscr{C})\geq P(A)\geq 1-\frac{\E(\norm{X-\c})}{\epsilon-\r-\eta}.$$
 Letting $\eta\to 0$, we have
$$P(\mathscr{C})\geq 1-\frac{\E(\norm{X-\c})}{\epsilon-\r}.$$
\end{proof}

It turns out that the lower bound for the probability of consensus is independent of the size and topology of a graph and the convergence rate, but depends on the confidence threshold, the radius, the norm, the distribution of an initial opinion and the center. In particular for the Deffuant model with $\mu(t)=\mu$ at all times, $P(\mathscr{C})=P(\hbox{a consensus is achieved at time $0$})$ if $\mu=0$; else $P(\mathscr{C})\geq 1-\E(\norm{X-\c})/(\epsilon-\r)$. The following is a demonstration of the probability of consensus given that~$\inf_{t\geq 0}\mu(t)>0$, $\norm{\,}$ is the Euclidean norm and $X$ is uniformly distributed on the opinion space~$\Delta$, denoted by $X=\unif(\Delta)$.

\begin{example}
For~$\Delta=B(\Vec{0},r)$, the open ball centered at~$\Vec{0}$ of radius~$r$, we have~$\c=\Vec{0}$, $\r=r$ and $\norm{X-\Vec{0}}=\unif\big((0,r)\big)$, therefore $$P(\mathscr{C})\geq 1-\frac{r/2}{\epsilon-r}=\frac{2\epsilon-3r}{2\epsilon-2r}.$$
\end{example}
Observe that $\Delta=[0,1]$ is equivalent to $\Delta=B(0,1/2)$, therefore $P(\mathscr{C})\geq \frac{4\epsilon-3}{4\epsilon-2}.$ Observe that the larger the confidence threshold is, the more likely a consensus can be achieved.
\section{Conclusion}
For the mixed model, all agents can decide their social relationship with each other, and the two selected social neighbors can decide their approach rate at all times. Some social relationships, such as a friendship, may not last for ever. The approach rate depicts the degree of cooperation among the two selected social neighbors. The larger the approach rate is, the more likely they cooperate. The variation of the social relation and the approach rate over time for the mixed model is much closer to real world circumstances. It turns out that asymptotic stability holds for the mixed model if the social graph is connected for infinitely many times and the two selected social neighbors approach each other at a rate no less than some positive number at all times. Also, we derive a lower bound for the probability of consensus, independent of the size and topology of a graph and the convergence parameter.

\end{document}